\theoremstyle{plain} \numberwithin{equation}{section}
\newtheorem{theorem}{Theorem}[section]
\newtheorem{lemma}[theorem]{Lemma}
\newtheorem{claim}[theorem]{Claim}
\newtheorem{proposition}[theorem]{Proposition}
\theoremstyle{definition}
\newtheorem{remark}[theorem]{Remark}
\newcommand{\pa}{\partial}
\newcommand{\x}{\times}
\def \f-{f^{-1}}
\def \fp-{f^{-1}_\partial}
\title{Tight contact structures on some families of small Seifert fiber spaces}
\author{Shunyu Wan}
\date{}
\begin{document}

\maketitle

\begin{abstract}

\small{Suppose $K$ is a knot in a 3-manifold $Y$, and that $Y$ admits a pair of distinct contact structures. Assume that $K$ has Legendrian representatives in each of these contact structures, such that the corresponding Thurston-Bennequin framings are equivalent. This paper provides a method to prove that the contact structures resulting from Legendrian surgery along these two representatives remain distinct. Applying this method to the situation where the starting manifold is $-\Sigma(2,3,6m+1)$ and the knot is a singular fiber, together with convex surface theory
 we can classify the tight contact structures on certain families of Seifert fiber spaces. }
\end{abstract}

\section{Introduction} 

Classifying (non-isotopic positive) tight contact structures on 3-manifolds (especially Seifert fiber spaces) has been studied for a while. Usually one can get an upper bound on the number of such structures from convex surface theory but it can be difficult to actually construct different ones. We first observe that if we start with two different contact structures on some manifold $Y$ whose Ozsv\'ath–Szab\'o contact invariants \citep{HondaKazezMaticContactClassinHeegaardFloer} \citep{OzsvathSzaboHeegaardFloerandContact} are different then we can produce non-isotopic contact structures on a new 3-manifold obtained by surgery on a knot in $Y$. Note this is not a new theorem, as it follows easily from the literature for example the work by Ghiggini \citep{GhigginiOSinvariantsandfillabilityofcontactstructures}, but we have not seen it explicitly stated. 

\begin{theorem} {\label{Legendrian Surgery preserve uniquness}}
Let $\xi_1$ and $\xi_2$ be two contact structures on a 3-manifold $Y$. Take any smooth knot $K$ in Y. Let $K_i$, $i=1, 2$ be a Legendrian representative of $K$ in $\xi_i$. We denote $(Y_i^-,\xi_i^-)$ to be the contact 3-manifold obtained by taking the $-1$ contact surgery (Legendrian surgery) on $(Y,\xi_i)$ respectively. Moreover, we write $c(\xi_i),c(\xi_i^-)$ to be the contact invariant of the correspond contact manifold. Then 
    \begin{equation}
        c(\xi_1)\neq c(\xi_2) \text{ implies } c(\xi_1^-)\neq c(\xi_2^-)
    \end{equation} 
\end{theorem}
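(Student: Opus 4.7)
My plan is to reduce the theorem to the known naturality of the Ozsv\'ath--Szab\'o contact invariant under Legendrian surgery, as established in \citep{OzsvathSzaboHeegaardFloerandContact}, reformulated for the hat-flavor in \citep{HondaKazezMaticContactClassinHeegaardFloer}, and used in this sort of way by Ghiggini \citep{GhigginiOSinvariantsandfillabilityofcontactstructures}. Recall that if $(Y',\xi')$ is obtained from $(Y,\xi)$ by contact $(-1)$-surgery on a Legendrian knot, and $W\colon Y \to Y'$ denotes the associated smooth $2$-handle cobordism, then the cobordism map
\[
F_{-W}\colon \widehat{HF}(-Y') \to \widehat{HF}(-Y)
\]
induced by $-W$ (that is, $W$ reversed and regarded as a cobordism from $-Y'$ to $-Y$, with the total map summed over $\mathrm{Spin}^c$ structures on $W$) sends $c(\xi')$ to $c(\xi)$. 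This naturality statement is the essential external input.

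The key observation that activates this input is that since $K_1$ and $K_2$ are Legendrian representatives of the same underlying smooth knot $K$ and have equal Thurston--Bennequin framings, the two contact $(-1)$-surgeries correspond to the same smooth surgery datum. Consequently $Y_1^- = Y_2^-$ as smooth $3$-manifolds, and the smooth $2$-handle cobordisms $W_1, W_2\colon Y \to Y^-$ coincide; the induced maps $F_{-W_1}$ and $F_{-W_2}$ on Heegaard Floer homology are therefore the same map, which I will simply denote $F_{-W}$.

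With this in place the theorem follows by contraposition: assuming $c(\xi_1^-) = c(\xi_2^-)$, applying $F_{-W}$ to both sides and invoking the naturality statement yields
\[
c(\xi_1) \;=\; F_{-W}(c(\xi_1^-)) \;=\; F_{-W}(c(\xi_2^-)) \;=\; c(\xi_2),
\]
contradicting the hypothesis $c(\xi_1) \neq c(\xi_2)$. The only point requiring care is pinning down the precise form of the Ozsv\'ath--Szab\'o naturality result in the literature (hat versus plus flavor, and the fact that one may work with the cobordism map summed over all $\mathrm{Spin}^c$ structures on $W$), so that no auxiliary choice of $\mathrm{Spin}^c$ structure needs to be tracked and the cancellation above is unambiguous; this is exactly what the cited works provide, which is why the authors already remark that the theorem is not genuinely new.
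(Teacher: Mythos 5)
Your proof is correct, and it rests on the same two pillars as the paper's: the naturality of the contact invariant under the Legendrian-surgery cobordism, and the observation that the two surgeries are smoothly identical so the cobordism maps coincide. The one genuine difference is which form of naturality you invoke. You use the coarse statement $F_{-W}(c(\xi'))=c(\xi)$ for the cobordism map summed over all $\mathrm{Spin}^c$ structures (as in Lisca--Stipsicz/Ozsv\'ath--Szab\'o), which lets you conclude in one line and requires no tracking of $\mathrm{Spin}^c$ structures at all. The paper instead uses Ghiggini's refined, $\mathrm{Spin}^c$-by-$\mathrm{Spin}^c$ version, namely $F_{\overline{W},\mathfrak{s}}(c(\xi_i^-))=c(\xi_i)$ for the canonical $\mathfrak{t}_i$ and $0$ otherwise; this forces a case split on whether $\mathfrak{t}_1=\mathfrak{t}_2$ and an auxiliary normalization $c(\xi_1)\neq 0$, neither of which your argument needs. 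Both are legitimate; yours is marginally cleaner, while the paper's makes visible the role of the canonical $\mathrm{Spin}^c$ structures, which matters later when Lemma 3.1 is used to distinguish different stabilizations. One small point to make explicit: the theorem as stated does not assume the Thurston--Bennequin framings agree, so you should (as the paper does) dispose first of the case where the two smooth surgeries differ, in which the conclusion is vacuous or immediate.
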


A version of the above Theorem is used in the beautiful paper by Ghiggini and Van Horn-Morris \citep{GhigginiVanHornMorrisTightContactStructuresOnBrieskornSpheres}, where they obtained all the contact structure on the ``wrongly''-oriented Brieskorn homology sphere $-\Sigma(2,3,6m-1)$ by doing Legendrian surgery, starting from different contact structures on the 3-manifold $Y$ given by 0-surgery on the right handed trefoil. The way they distinguish the contact structures is by using the contact invariant, however those different structures on $Y$ all have vanishing contact invariant. Thus they need to use twisted coefficients, and by explicitly analysing the contact structures and open book decomposition, they show those vanishing ordinary contact invariants are all non-zero and distinct with twisted coefficients---and after doing Legendrian surgery on those different contact structures on $Y$, the resulting ones on $-\Sigma(2,3,6m-1)$ all have distinct untwisted contact invariant. 
 
 Theorem \ref{Legendrian Surgery preserve uniquness} can be thought of as showing that ``the property of having distinct contact invariants is preserved under Legendrian surgery.'' 
 

Using the main Theorem together with some 3-manifolds whose tight contact structures are classified, along with some convex surface theory, we are able to further classify tight contact structures on other 3-manifolds. For example, as starting manifolds we can consider the family of Brieskorn spheres $-\Sigma(2,3,6m+1)$, whose tight contact structures have been classified by Tosun. 

\begin{theorem}{\label{BulentClassification}}
(B.Tosun \citep{TosunTightsmallSeifertfiberedmanifolds}) For $m\geq 1$, Brieskorn sphere $-\Sigma(2,3,6m+1)$ has exactly $\frac{m(m+1)}{2}$ tight contact structures which are all strongly fillable and with pairwise distinct non-zero contact invariant. 
\end{theorem}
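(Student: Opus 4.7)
The plan is to follow the two-sided approach standard in the classification of tight contact structures on small Seifert fibered spaces: establish an upper bound of $\frac{m(m+1)}{2}$ via convex surface theory, then construct that many explicit contact structures, and finally distinguish them (and certify their strong fillability) using Ozsv\'ath--Szab\'o contact invariants together with Stein fillings.

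For the upper bound, I would use the Seifert invariants of $-\Sigma(2,3,6m+1)$ to decompose it as a union of three solid-torus neighborhoods $V_1, V_2, V_3$ of the singular fibers glued to a pair-of-pants-times-$S^1$ piece. After isotoping any tight contact structure $\xi$ to be convex with respect to the decomposing tori, Honda's classification of tight structures on solid tori reduces the problem to combinatorial data: the dividing slopes on each $\partial V_i$ and a sign sequence indexed by the continued-fraction expansion of those slopes. The Seifert invariants $(2,3,6m+1)$ restrict the admissible slopes, and a bypass analysis---ruling out overtwisted fillings and identifying apparently different configurations that in fact represent the same isotopy class---yields at most $\frac{m(m+1)}{2}$ possibilities.

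For the lower bound I would exhibit a contact surgery diagram, for instance Legendrian surgery on a suitably stabilized Legendrian right-handed trefoil in the standard contact structure on $S^3$ together with auxiliary Legendrian surgeries on unknots whose integer framings are dictated by the continued-fraction expansion of the third Seifert invariant. Varying the Legendrian stabilizations produces a family of contact structures realized by Stein---hence strongly---fillings; a direct count shows that there are exactly $\frac{m(m+1)}{2}$ inequivalent stabilization patterns. Each such filling forces the Ozsv\'ath--Szab\'o contact invariant to be non-zero, and I would verify pairwise distinctness by computing the associated chain-level classes in the surgery complex via a Lisca--Stipsicz-type formula, or equivalently by distinguishing the fillings via their first Chern classes or $d_3$-invariants.

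The main obstacle is matching the upper and lower bounds exactly at $\frac{m(m+1)}{2}$. Convex surface theory tends to overcount, because distinct bypass configurations can turn out to be isotopic after bypass sliding, while the contact-invariant side risks undercounting if two diagrams accidentally realize isotopic contact structures. Reconciling the two counts requires a sharp bypass-uniqueness argument on the convex-surface side together with the non-vanishing and pairwise distinctness of the untwisted contact invariants on the surgery side; carrying out the latter uniformly in $m$ is the technical heart of the argument, since the structure of the continued-fraction blocks grows with $m$ and each case must be handled compatibly.
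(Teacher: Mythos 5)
This statement is not proved in the paper at all: it is quoted verbatim from Tosun's work and used as an input to Theorems 1.3 and 1.4, so there is no in-paper argument to compare yours against. Judged on its own terms, your outline correctly identifies the two-sided strategy that Tosun (and, for the derived manifolds, Sections 3--4 of this paper) actually follows: an upper bound from the Honda-style decomposition of the Seifert space into $(\Sigma\times S^1)\cup V_1\cup V_2\cup V_3$ with bypass/Imbalance-Principle bookkeeping, and a lower bound from Stein-realizable contact surgery diagrams whose structures are distinguished by the $Spin^c$ structures of their fillings (the Plamenevskaya/Lisca--Mati\'c mechanism, quoted in this paper as Lemma 3.1), with non-vanishing of the contact invariant coming from fillability of a homology sphere.

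That said, as written this is a program rather than a proof, and the two places where the real work lives are exactly the places you defer. First, the convex-surface side: you assert that the bypass analysis ``yields at most $\frac{m(m+1)}{2}$ possibilities,'' but producing that number requires the full computation of attaching matrices, boundary slopes after edge-rounding, and the maximal-twisting stratification (compare the table and Claim 4.2 in Section 4 of this paper, where the analogous count is carried out line by line); nothing in your sketch pins down why the answer is $\frac{m(m+1)}{2}$ rather than some other polynomial in $m$. Second, your proposed lower-bound diagram is suspect: $-\Sigma(2,3,6m+1)$ is the Brieskorn sphere with \emph{reversed} orientation, i.e.\ a \emph{positive} surgery on the left-handed trefoil, and its standard star-shaped plumbing (central weight $-2$, with $e_0+\sum r_i>0$) is not negative definite, so one cannot simply do Legendrian ($-1$ contact) surgery on a stabilized right-handed trefoil plus a chain of unknots in $(S^3,\xi_{std})$ with framings below $tb$; obtaining a Stein-fillable surgery presentation here requires a nontrivial manipulation of the diagram, and this is precisely where the case $e_0=-2$ is harder than $e_0\le -3$. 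Finally, distinguishing the resulting structures by $d_3$-invariants alone will not work in general for a homology sphere with this many tight structures; you need the first Chern classes of the Stein fillings together with the resulting distinctness of the contact classes. So the architecture is right, but the quantitative heart of the theorem is missing on both sides.
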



By applying Theorem \ref{Legendrian Surgery preserve uniquness} to the case where $Y=-\Sigma(2,3,6m+1)$, and $K$ is a singular fiber, we are able to conclude the following classification Theorem. 

\begin{theorem}{\label{Classifcation of SFS1}}
For $1\leq m$ and $1\leq n < 18m+4$ the Seifert fibered 3-manifolds
$M(-2;\frac{3m+n-2}{6m+2n-5},\frac{2}{3},\frac{5m+1}{6m+1})$ have exactly  $$\sum_{a=1}^{m} (n+3(a-1))a=\frac{(2m+n-2)(m+1)m}{2}$$
different tight contact structures which are all strongly fillable.
\end{theorem}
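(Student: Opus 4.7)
The proof plan is to sandwich the number of tight contact structures on $M := M(-2;\frac{3m+n-2}{6m+2n-5},\frac{2}{3},\frac{5m+1}{6m+1})$ between a lower bound produced by Legendrian surgery out of Tosun's classification and a matching upper bound obtained via convex surface theory. My first step is to give a surgery description of $M$: identify a singular fiber $K$ in $-\Sigma(2,3,6m+1)$ and a family of smooth framings, parametrized by $n$, such that surgery on $K$ with this framing yields $M$. This is a continued-fraction/slam-dunk bookkeeping computation comparing the Seifert invariants of $-\Sigma(2,3,6m+1)$ with those of $M$, and the bound $n < 18m+4$ should cut out exactly the range of framings realizable as a Thurston-Bennequin framing of some Legendrian representative of $K$ in some tight contact structure on $-\Sigma(2,3,6m+1)$.

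For the lower bound I would organize the $\frac{m(m+1)}{2}$ tight structures of Theorem \ref{BulentClassification} into $m$ levels indexed by $a \in \{1,\ldots,m\}$, with $a$ structures at level $a$, and argue that in a contact structure at level $a$ there are exactly $n+3(a-1)$ non-isotopic Legendrian representatives of $K$ realizing the chosen smooth framing---the factor of $3$ reflecting the jump in the maximal Thurston-Bennequin number of $K$ between successive levels, and the $n$ coming from stabilizations below the maximum. Theorem \ref{Legendrian Surgery preserve uniquness} then distinguishes the surgeries coming from different starting tight structures, while different stabilizations within a single starting structure are separated by the standard rotation-number/$\mathrm{Spin}^c$ argument on contact invariants. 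Summing $a\cdot(n+3(a-1))$ over $a$ reproduces $\frac{(2m+n-2)(m+1)m}{2}$, and strong fillability of the resulting contact structures is inherited from Tosun's fillings via Weinstein handle attachment.

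For the matching upper bound I plan to follow the template of Wu and of Ghiggini--Lisca--Stipsicz for small Seifert fibered spaces: decompose $M$ along vertical convex tori into solid-torus neighborhoods of the three exceptional fibers and a circle bundle over a pair of pants, classify the admissible dividing-slope configurations on each boundary torus via Farey tessellation paths determined by the continued-fraction expansions of $\frac{3m+n-2}{6m+2n-5}$ and $\frac{5m+1}{6m+1}$, and enumerate the tight gluings. The main obstacle I anticipate is precisely this upper bound: making the Farey-path count match the arithmetic sum $\sum_{a=1}^{m}(n+3(a-1))a$ exactly, and ruling out any over-count that would not descend to a genuine tight structure, is where the real combinatorial work lies.
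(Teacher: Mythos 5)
Your plan matches the paper's proof essentially step for step: the lower bound comes from Legendrian surgeries on the singular fiber $F_1$ of $-\Sigma(2,3,6m+1)$, organized by Tosun's triangle of tight structures (with the $a$-th row contributing contact surgery coefficient $-3a+3-n$ and hence $n+3(a-1)$ stabilizations), distinguished across different starting structures by Theorem \ref{Legendrian Surgery preserve uniquness} and within a single starting structure by the rotation-number/$\mathrm{Spin}^c$ argument (Simone's lemma), while the matching upper bound is the standard convex-surface-theory count on the decomposition into three solid tori and $\Sigma\times S^1$. One minor correction: the hypothesis $n<18m+4$ is not about which framings are realizable as Thurston--Bennequin framings; it enters only in the upper-bound argument, where it is needed to rule out configurations with $k\geq 2$ and conclude $tw(\xi)\geq -6m+5$, whereas the lower-bound construction only needs $n\geq 1$ so that the contact surgery coefficients are negative.
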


Moreover, similar to Theorem \ref{Classifcation of SFS1} by using another singular fiber we can get another family of Seifert fiber space whose tight contact structures we can classify. 

\begin{theorem} \label{Classifcation of SFS2}
For $1\leq m$ and $1\leq n < 12m+3$ the Seifert fibered 3-manifolds 
$M(-2;\frac{1}{2},\frac{4(m-1)+2n+1}{6(m-1)+3n+1},\frac{5m+1}{6m+1})$ have exactly   
$$\sum_{a=1}^{m} (n+2(a-1))a=\frac{(4m-4+3n)(m+1)m}{6}$$ different tight contact structures which are all strongly fillable.
\end{theorem}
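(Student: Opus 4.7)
The plan is to repeat the strategy used in the proof of Theorem~\ref{Classifcation of SFS1} with the roles of two exceptional fibers swapped: instead of taking $K$ to be the multiplicity-$2$ singular fiber of $Y=-\Sigma(2,3,6m+1)$, I take $K$ to be the multiplicity-$3$ singular fiber. The lower bound on the number of tight contact structures is produced by starting from each of the $\tfrac{m(m+1)}{2}$ tight contact structures on $Y$ provided by Theorem~\ref{BulentClassification}, choosing Legendrian representatives of $K$ of various Thurston--Bennequin and rotation numbers, and performing Legendrian surgery. Theorem~\ref{Legendrian Surgery preserve uniquness} guarantees that across different starting contact structures the resulting Ozsv\'ath--Szab\'o contact invariants remain pairwise distinct, while within a single starting structure the Legendrian surgeries along representatives with distinct rotation numbers land in distinct $\mathrm{spin}^c$ structures and so also carry distinct invariants. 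Strong fillability is automatic since Legendrian surgery preserves Stein fillability.

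First I would perform a short Kirby-calculus computation, applying Rolfsen twists and slam-dunks to the standard surgery diagram of $-\Sigma(2,3,6m+1)$, to verify that smooth $r$-surgery on the multiplicity-$3$ singular fiber produces $M(-2;\tfrac{1}{2},\tfrac{4(m-1)+2n+1}{6(m-1)+3n+1},\tfrac{5m+1}{6m+1})$ for a suitable slope $r=r(m,n)$, and that $r$ is realized as $\mathrm{tb}-1$ for some Legendrian representative of $K$ exactly in the range $1\le n<12m+3$. Next comes the main computational step, which I expect to be the principal obstacle: for each tight contact structure $\xi_a^j$ on $Y$ (indexed so that there are $a$ of them for each $1\le a\le m$ in Tosun's classification), I need to determine the maximum Thurston--Bennequin number $\overline{\mathrm{tb}}(K,\xi_a^j)$ of the multiplicity-$3$ singular fiber. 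I would extract this either from Tosun's explicit contact surgery presentation of these structures or from a compatible open book; the count of distinct Legendrian representatives (in tb and rotation number, up to Legendrian isotopy) whose surgery realizes the required slope should come out to $n+2(a-1)$ per structure at level $a$, and summing over the $a$ structures at each level gives the lower bound $\sum_{a=1}^m(n+2(a-1))\, a$.

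For the matching upper bound I would run convex surface theory on the target Seifert fibered space, splitting along vertical tori into solid-torus neighborhoods of the three singular fibers together with an $S^1$-bundle over a pair of pants, making all boundary tori convex, and invoking Honda's classifications of tight contact structures on $T^2\times[0,1]$ and on solid tori. The count is then controlled by the negative continued fraction expansions of the Seifert slopes $\tfrac{1}{2}$, $\tfrac{4(m-1)+2n+1}{6(m-1)+3n+1}$, and $\tfrac{5m+1}{6m+1}$, subject to the relative Euler class constraint. Arranging the bookkeeping so that this count collapses to exactly $\sum_{a=1}^m(n+2(a-1))\, a$ rather than an overcount is the second delicate point; the argument should parallel, with straightforward modifications, the convex-surface analysis carried out in the proof of Theorem~\ref{Classifcation of SFS1}.
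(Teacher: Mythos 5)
Your proposal matches the paper's approach: the paper proves Theorem~\ref{Classifcation of SFS2} by running the argument for Theorem~\ref{Classifcation of SFS1} verbatim with the multiplicity-$3$ singular fiber $F_2$ in place of the multiplicity-$2$ fiber $F_1$ --- identifying the surgered manifold from the surgery diagram, getting the lower bound from Theorem~\ref{Legendrian Surgery preserve uniquness} together with Lemma~\ref{lm 2.1} (distinct $Spin^c$ structures for distinct stabilizations), and matching it with a convex-surface-theory upper bound. The only cosmetic difference is that the paper reads off the maximal twisting of the singular fiber from Tosun's maximal regular-fiber twisting via the attaching matrix $A_2$, rather than from a surgery presentation or open book, which is what produces the $n+2(a-1)$ count you anticipate.
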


In these theorems, the notation $M(e_0; r_1, r_2, r_3)$ indicates a Seifert fibered 3-manifold with three exceptional fibers, and having the indicated Seifert invariants. Note that Tosun has classified tight contact structures on some other small Seifert fiber spaces with $e_0=-2$ (Theorem 1.1 \citep{TosunTightsmallSeifertfiberedmanifolds}), but the two families mentioned in above theorem are not included in those results. 

We will first see the proof of Theorem \ref{Legendrian Surgery preserve uniquness} in the next section. In section 3 we will see how to get those Seifert fiber space mentioned in Theorem  \ref{Classifcation of SFS1}  from $-\Sigma(2,3,6m+1)$, and how to use  Theorem \ref{Legendrian Surgery preserve uniquness} together with another lemma due to Jonathan Simone \citep{SimoneTightcontactstrucutresonPlumbedManifolds} to obtain a lower bound on the number of different tight contact structures. Last in section 4 we show that number is equal to an upper bound coming from convex surface theory.

\textbf{Acknowledgments.} The author would like to thank his advisor Tom Mark for his patience and numerous helpful guidance and suggestions. The author would also like to thank B\"ulent Tosun for useful suggestions of the draft. The author was supported in part by grants from the NSF (RTG grant DMS-1839968) and the Simons Foundation (grants 523795 and 961391 to Thomas Mark).

\section{Proof of Theorem 1.1}

The proof is actually quite simple just by manipulating with the naturality and vanishing result of contact invariant under the Legendrian surgery 

\begin{proof}[Proof of Theorem \ref{Legendrian Surgery preserve uniquness}]
The Theorem is clearly true if $Y_1^-$ is not smoothly diffeomorphic to $Y_2^-$, so we suppose $Y_1^-=Y_2^-=Y^-$ in other words we suppose the smooth surgeries are the same. Denote $c(\xi_i)$ and $c(\xi_i^-)$ to be the contact invariant in $\widehat{HF}(-Y)$ and $\widehat{HF}(-Y^-)$ correspond to $\xi_i$ and $\xi_i^-$, then by the hypothesis $c(\xi_1)\neq c(\xi_2)$ we may assume $c(\xi_1)\neq 0$.
Let $W$ be the cobordism from $Y$ to $Y^-$  induced by the smooth surgery on the knot, and let $\overline{W}$ be the opposite cobordism from $-Y^-$ to $-Y$. Legendrian surgery induces a map \citep{OzsvathSzaboHeegaardFloerandContact} $F_{\overline{W}}: \widehat{HF}(-Y^-) \rightarrow \widehat{HF}(-Y)$. Denote by $\mathfrak{t}_i$  the canonical $Spin^c$ structure on $\overline{W}$ induced by the Weinstein structure coming from thinking of $W$ as a Weinstein handle attached to $K_i$ in $\xi_i$, respectively. By \citep[Lemma 2.11]{GhigginiOSinvariantsandfillabilityofcontactstructures}, for any $Spin^c$ structure $\mathfrak{s}$ on $\overline{W}$ we have 
\begin{equation}
    F_{\overline{W},s}(c(\xi_i^-))= \begin{cases} 
      c(\xi) & if \ s=t_i \\
      0 & if \ s\neq t_i.
   \end{cases} 
\end{equation}
We will argue by contradiction that $c(\xi_1^-)\neq c(\xi_2^-)$. Suppose $c(\xi_1^-)=c(\xi_2^-)$. Since smoothly the two Legendrian surgeries are the same surgery on the same knot, they induce the same map on Heegaard Floer homology. However there are two possibilities of the relation between their corresponding $Spin^c$ structures, either $\mathfrak{t}_1=\mathfrak{t}_2$ or not. 

If $\mathfrak{t}_1=\mathfrak{t}_2$ then by equation (2.1) we have $c(\xi_1)=F_{\overline{W},\mathfrak{t}_1}(c(\xi_1^-))=F_{\overline{W},\mathfrak{t}_2}(c(\xi_2^-))=c(\xi_2)$ which is a contradiction. On the other hand if $\mathfrak{t}_1\neq\mathfrak{t}_2$ then again by (2.1) we have $c(\xi_1)=F_{\overline{W},\mathfrak{t}_1}(c(\xi_1^-))=F_{\overline{W},\mathfrak{t}_1}(c(\xi_2^-))=0$ which is a contradiction again. Therefore $c(\xi_1^-)\neq c(\xi_2^-)$. 
\end{proof}

\section{Construction of Seifert fiber spaces and contact structures}

Now we want to use the Theorem \ref{Legendrian Surgery preserve uniquness} in a specific example, but before that we need to see how tight contact structures on those Seifert spaces mentioned in Theorem \ref{Classifcation of SFS1} come from the tight contact structures on $-\Sigma(2,3,6n+1)$ in more detail.

Consider the diagram of Figure \ref{P1} (without the red part) for $M_m=-\Sigma(2,3,6m+1)$. We may also  describe $M_m$ as the Seifert manifold $M(0; \frac{1}{2},-\frac{1}{3},-\frac{m}{6m+1})$; in particular we can decompose it as $(\Sigma \times S^1) \cup_{A_1\cup A_2 \cup A_3} (V_1\cup V_2 \cup V_3)$ where $\Sigma$ is a pair of pants (i.e. 3 punctured sphere) and $V_i$'s are solid torus neighborhood of the singular fibers $F_i$. The $A_i$ are homeomorphisms from $\pa V_i$ to $-\pa(\Sigma \times S^1)$. We also choose identifications on $V_i\cong D^2 \x S^1$, $\pa V_i \cong \mathbb{R}^2/\mathbb{Z}^2$ such that $(1,0)^T$ corresponds to meridian, and $-\pa (M_m\backslash V_i) \cong \mathbb{R}^2/\mathbb{Z}^2$ such that $(0,1)^T$ corresponds to $S^1$ fiber and $(1,0)^T$ corresponds to $-\pa(pt \x \Sigma)$. After we have determined the identifications we can represent the maps $A_i$ using matrices as follows

\[
A_1=\left( \begin{array}{cc}
2&1\\ -1&0
\end{array} \right), \qquad A_2=\left( \begin{array}{cc}
3 & -1 \\ 1&0
\end{array} \right), \qquad A_3=\left( \begin{array}{cc}
6m+1 & 6\\ m & 1 
\end{array} \right).
\] 

\begin{figure} [h]
    \centering
    \includegraphics[width=5cm]{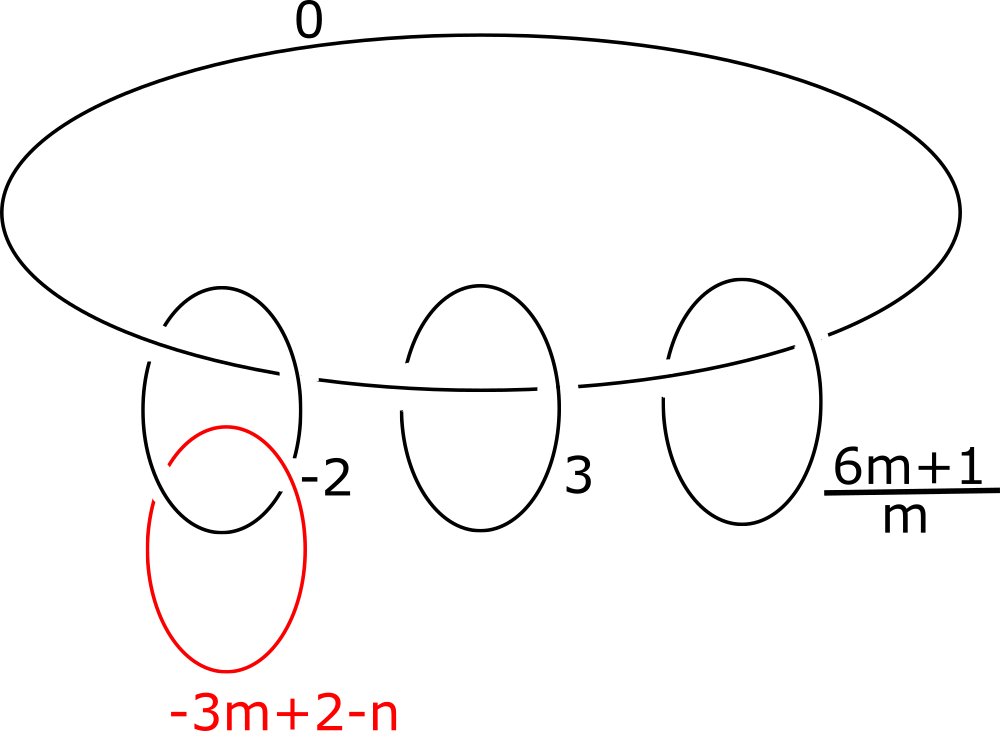}
    \label{P1}
    \caption{The surgery diagram without red circle and red framing describe $M_m=-\Sigma(2,3,6m+1)$, and describe $M_m^n=M(0; \frac{3m+n-2}{6m+2n-5},-\frac{1}{3},-\frac{m}{6m+1} )$ when we consider the red circle and red framing.}
\end{figure}

 We observe that the singular fiber $F_1$ is   represented by the red circle in Figure \ref{P1}, which is the meridian of the $-2$ framed circle in the figure. The Seifert space $M_m^n=M(0; \frac{3m+n-2}{6m+2n-5},-\frac{1}{3},-\frac{m}{6m+1})$ is obtained by taking the surgery along the red circle with the diagram framing $-3m+2-n$. It's not hard to see that $M_m^n$ is equivalent to the Seifert fibered 3-manifold $M(-2;\frac{3m+n-2}{6m+2n-5},\frac{2}{3},\frac{5m+1}{6m+1})$ in Theorem \ref{Classifcation of SFS1} by manipulating the Seifert invariants. Next let's see how the framing on the red circle relates to the different contact structures on $M_m$. 

One way to think about those different contact structures of $M_m$ is by putting them into a triangle with $m$ rows, where there are $a$-th many vertices at the $a$-th row (from top to bottom), and each vertex represents a contact structure on $M_m$. If a vertex is in the $a$-th row then the corresponding contact structure has the maximal twisting number of regular fiber equal to $-6(m-a)-1$ (\citep[Claim 4.2]{TosunTightsmallSeifertfiberedmanifolds}). Given those different maximal twisting numbers of regular fiber together with the attaching map we are able to determine the maximal twisting number of singular fiber corresponding to each row.

In particular if we put the $V_1$ into a standard contact neighborhood with boundary slope $\frac{1}{n_1}$, then the boundary slope on $-\pa(M_m\backslash V_1)$ is $s_1=-\frac{n_1}{2n_1+1}$. If the maximal twisting number of regular fiber is $-6(m-a)-1$ we have $2n_1+1=-6(m-a)-1$, so $n_1=-3(m-a)-1$

To put this another way, we denote twisting number of $F_1$ to be $tw(F_1)=contact \ framing - torus \ framing$ where the torus framing is determined by the longitude $(0,1)^T$ in $V_1$. Then on the $a$-th row of the triangle we have that the maximal twisting number of regular fiber is $-6(m-a)-1$ and the maximal twisting number of $F_1$ is $tw(F_1)=-3(m-a)-1=-3m+3a-1$. 

Moreover, since $A_1 (0,1)^T=(1,0)^T$ we can see that the  framing of $F_1$ corresponding to the choice of $V_1$ exactly corresponds to the 0 framing of the meridian of $-2$ framed circle in the diagram. Hence if we start with a contact structure on $M_m$ lying in the $a$-th row of the triangle, the $(-3m+2-n)$ framing on the red circle in the diagram corresponds to $(-3m+2-n)$ torus framed surgery, which by the twisting number calculation is the smooth coefficient corresponding to $(-3a+3-n)$ contact surgery  (for example, starting with the contact structure the top row, we are considering contact $-n$ surgery). Observe that performing contact $(-3a+3-n)$ surgery involves $(3a-3+n)$ different choices of stabilization to become a Legendrian surgery. Thus each tight contact structure having twisting number $-6(m-a)-1$ gives rise to $(3a-3+n)$ (possibly) different Legendrian surgeries. There are $a$-th many different tight structures for each $1\leq a\leq m$ so there are potentially a total of $\sum_{a=1}^m (n+3(a-1))a$ different tight contact structures on $M_m^n$ given by Legendrian surgery on the singular fiber $F_1$. In summary, we have the following table. 

\begin{center}
 \addtolength{\leftskip} {-2cm}
    \addtolength{\rightskip}{-2cm}
\begin{tabular}{||c c c c c||} 
 \hline
 tight contact  & maximal twisting  & maximal twisting & contact   & different ways    \\ 
 structure & number of  &number of  & surgery & of stabilizing  \\
on $M_m$ & regular fiber & $F_1$ & coefficient & contact surgery
 
 $\xi$\\ [0.5ex] 
 \hline\hline
 $\xi_1^1$ & $-6(m-1)-1$ & $-3m+3-1$ & $-n$ & $n$ \\ 
 $\xi_2^1$ \ \ $\xi_2^2$ & $-6(m-2)-1$ & $-3m+6-1$ & $-n-3$ & $n+3$ \\
 \vdots & \vdots &\vdots & \vdots & \vdots \\
  $\xi_m^1$ \ \ $\xi_m^2$ \ldots $\xi_m^m$ & $-1$ &$-1$ & $-n-3(m-1)$ & $n+3(m-1)$ \\
[1ex] 
 \hline
\end{tabular}
\end{center}

To show all of those resulting contact structure on $M_m^n$ are all different we need Theorem \ref{Legendrian Surgery preserve uniquness} and the following Lemma.  
\begin{lemma}{\label{lm 2.1}}
(J. Simone \citep{SimoneTightcontactstrucutresonPlumbedManifolds})
If $(Y,\xi)$ is weakly symplectically fillable and $(W.J_i)$ is a Stein cobordism from $(Y,\xi)$ to $(Y',\xi_i)$ for $i=1,2$ such that the $Spin^c$ structures induced by $J_1$ and $J_2$ are not isomorphic, then $\xi_1$ and $\xi_2$ are non-isotopic tight contact structures.
\end{lemma}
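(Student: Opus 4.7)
The plan is to argue by contradiction along essentially the same lines as the proof of Theorem \ref{Legendrian Surgery preserve uniquness}, only now the roles of the fixed and varying data are reversed: we start with a single contact structure $(Y,\xi)$ at the bottom of the cobordism, and distinguish two candidate structures $\xi_1, \xi_2$ at the top. The main tools will be (i) the non-vanishing of the Ozsv\'ath--Szab\'o contact invariant for weakly fillable contact structures (using twisted coefficients if necessary), and (ii) the naturality of the contact invariant under Stein cobordisms, in the form recalled as equation (2.1) above via Ghiggini's Lemma 2.11.

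First I would translate the two Stein cobordisms into their reversed forms $\overline{W}: -Y' \to -Y$ and consider the induced maps $F_{\overline{W},\mathfrak{s}}: \widehat{HF}(-Y') \to \widehat{HF}(-Y)$, taken with twisted coefficients over $\mathbb{Z}[H^2(Y;\mathbb{Z})]$ (and the pulled-back local system over $Y'$). The content of Ghiggini's lemma, applied separately to $(W,J_1)$ and $(W,J_2)$, is that for each $i$
\begin{equation*}
F_{\overline{W},\mathfrak{s}}(c(\xi_i)) =
\begin{cases} c(\xi) & \text{if } \mathfrak{s} = \mathfrak{t}_i,\\ 0 & \text{otherwise,}\end{cases}
\end{equation*}
where $\mathfrak{t}_i$ denotes the canonical Spin$^c$ structure on $W$ induced by $J_i$. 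Since $(Y,\xi)$ is weakly symplectically fillable, the Ozsv\'ath--Szab\'o theorem guarantees $c(\xi) \neq 0$ in the appropriately twisted $\widehat{HF}(-Y)$. Also, because a Stein cobordism on top of a weak filling yields a weak filling, each $\xi_i$ is itself weakly fillable and hence tight; so the only remaining issue is showing they are non-isotopic.

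Now suppose for contradiction that $\xi_1$ and $\xi_2$ are isotopic, so that $c(\xi_1) = c(\xi_2)$. Applying the map associated to the Spin$^c$ structure $\mathfrak{t}_1$ and using the hypothesis $\mathfrak{t}_1 \neq \mathfrak{t}_2$, we obtain
\begin{equation*}
c(\xi) \;=\; F_{\overline{W},\mathfrak{t}_1}\bigl(c(\xi_1)\bigr) \;=\; F_{\overline{W},\mathfrak{t}_1}\bigl(c(\xi_2)\bigr) \;=\; 0,
\end{equation*}
contradicting the nonvanishing of $c(\xi)$. This forces $\xi_1$ and $\xi_2$ to be non-isotopic.

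The only delicate point, and the step I expect to be the main obstacle, is making sure that Ghiggini's naturality statement (originally proved in the untwisted setting with $c(\xi) \neq 0$ assumed) is available in the twisted-coefficient framework that we must use to get $c(\xi) \neq 0$ from mere weak fillability; this requires one to trace through the cobordism map construction with coefficients in the appropriate group ring and verify that the Spin$^c$ decomposition and the identification of the canonical class still behave as in the untwisted case. Once that is in place, the algebraic skeleton of the argument is identical to the one used for Theorem \ref{Legendrian Surgery preserve uniquness}.
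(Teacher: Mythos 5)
Your argument is correct and is essentially the intended one: the paper does not reprove this lemma (it is quoted from Simone), but the cited proof is exactly your contradiction via the twisted contact invariant --- nonvanishing of $\underline{c}(\xi)$ for weakly fillable $\xi$ plus the $Spin^c$-refined naturality $F_{\overline{W},\mathfrak{s}}(c(\xi_i))=c(\xi)$ for $\mathfrak{s}=\mathfrak{t}_i$ and $0$ otherwise, mirroring the paper's proof of Theorem \ref{Legendrian Surgery preserve uniquness}. The one caveat you rightly flag (that the naturality statement and the well-definedness of the invariant, up to units of the group ring, persist with twisted coefficients) is indeed the only point requiring care, and it is established in the literature you would cite.
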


Now we are able to show the following Proposition.

\begin{proposition} {\label{prop}}
All tight contact structures on $M_m^n$ mentioned above are non-isotopic and strongly fillable. 
\end{proposition}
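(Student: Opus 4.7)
The plan is to establish strong fillability and pairwise non-isotopy of the $\sum_{a=1}^m (n+3(a-1))a$ candidate contact structures on $M_m^n$ separately. Strong fillability is immediate: by Theorem~\ref{BulentClassification} each starting contact structure $\xi_a^i$ on $M_m$ is strongly fillable, and attaching a Weinstein $2$-handle along a Legendrian representative of the singular fiber $F_1$ extends the strong filling to a strong filling of the resulting contact structure on $M_m^n$. For non-isotopy, I would split the pairs of candidate contact structures into two complementary cases and use Theorem~\ref{Legendrian Surgery preserve uniquness} for one and Lemma~\ref{lm 2.1} for the other.

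In the first case, the two resulting contact structures arise from Legendrian surgery in \emph{different} starting contact structures $\xi_a^i$ and $\xi_b^j$ on $M_m$. Theorem~\ref{BulentClassification} guarantees $c(\xi_a^i)\neq c(\xi_b^j)$ in $\widehat{HF}(-M_m)$, and by the choice of twisting number $-3m+3-n$ for all Legendrian representatives of $F_1$, each Legendrian surgery produces $M_m^n$ with the same smooth framing $-3m+2-n$. Hence Theorem~\ref{Legendrian Surgery preserve uniquness} applies with $K=F_1$ and shows the resulting contact invariants on $M_m^n$ are distinct, independent of stabilization choices; the two resulting contact structures are therefore non-isotopic.

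In the second case, the two contact structures come from Legendrian surgery on the \emph{same} $\xi_a^i$ via Legendrian representatives with different stabilization patterns. Each such surgery yields a Stein cobordism $(W, J_k)$ from $(M_m, \xi_a^i)$ to the surgered manifold, and $\langle c_1(J_k), [\hat F]\rangle = \mathrm{rot}(L_k)$ on the cocore disk $\hat F$ of the attached $2$-handle. The $n+3(a-1)$ stabilization patterns at fixed $tb$ realize that many pairwise distinct rotation numbers, and since $H^2(W, M_m;\mathbb{Z}) \cong \mathbb{Z}$ is generated by $\mathrm{PD}[\hat F]$, the induced $\mathrm{Spin}^c$ structures on $W$ are pairwise non-isomorphic. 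Because $(M_m, \xi_a^i)$ is weakly fillable, Simone's Lemma~\ref{lm 2.1} then yields non-isotopy. The main obstacle is the bookkeeping in this second case: confirming that the combinatorics of stabilization patterns really produces $n+3(a-1)$ distinct rotation numbers on Legendrian representatives of $F_1$ sitting in a fixed $\xi_a^i$. Combining both cases distinguishes every pair of candidate contact structures on $M_m^n$, completing the proof.
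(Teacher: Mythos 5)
Your proposal is correct and follows essentially the same strategy as the paper: strong fillability via preservation under Legendrian surgery, Theorem~\ref{Legendrian Surgery preserve uniquness} for surgeries starting from different contact structures on $M_m$ (which have distinct contact invariants by Theorem~\ref{BulentClassification} and yield the same smooth surgery), and Lemma~\ref{lm 2.1} for different stabilizations of a Legendrian representative in a fixed contact structure, distinguished by rotation numbers and hence by $\mathrm{Spin}^c$ structures on the cobordism. The extra detail you supply about $\langle c_1(J_k),[\hat F]\rangle=\mathrm{rot}(L_k)$ is the standard justification the paper leaves implicit, and the ``bookkeeping'' you worry about is routine: distributing the fixed number of stabilizations between positive and negative ones realizes exactly that many distinct rotation numbers.
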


\begin{proof}
First by Theorem \ref{BulentClassification} all tight contact structures on $M_m$ are strongly fillable. Since different stabilization have different rotation number which implies the $Spin^c$ structures induced by Legendrian surgeries are different, it follows from Lemma \ref{lm 2.1} that for any contact structure $\xi_i^j$ if we choose different stabilization, Legendrian surgery will result in different contact structures. Moreover, since Legendrian surgery preserves strong fillability, the contact structures we obtained are all strongly fillable.

Furthermore, for two different contact structures $\xi_i^j$ and $\xi_p^q$ their contact invariants are different and non-zero by Theorem \ref{BulentClassification}, and by construction we are considering the same smooth surgery on $F_1$. By Theorem \ref{Legendrian Surgery preserve uniquness} any pair of Legendrian surgeries on smoothly isotopic knots in $\xi_i^j$ and $\xi_p^q$ will give different contact structure on $M_m^n$. Therefore none of the tight contact structures on $M_m^n$ mentioned above are isotopic. 
\end{proof}

\section{Upper Bound}
Our way to determine an upper bound on the number of  distinct tight contact structures is a typical application of convex surface theory. 

\begin{proposition}
$M_m^n$ has at least $\sum_{a=1}^m (3a-3+n)a$ different tight contact structure when $1\leq m$ and $0\leq n < 18m+3$
\end{proposition}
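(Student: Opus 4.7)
The plan is to realize at least $\sum_{a=1}^m (3a-3+n)a$ pairwise non-isotopic tight contact structures on $M_m^n$ using the Legendrian surgery construction from Section 3; most of the work is already contained in Proposition \ref{prop}, so the proposal is really about filling in the $n=0$ boundary case that Proposition \ref{prop} does not address.

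For $1 \leq n < 18m+3$, Proposition \ref{prop} produces exactly $\sum_{a=1}^m (n+3(a-1))a$ pairwise non-isotopic tight contact structures on $M_m^n$, and this number equals $\sum_{a=1}^m (3a-3+n)a$ on the nose, so the lower bound is immediate in that range. What remains is to re-run the same construction at $n=0$ and verify that it still yields the claimed total.

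With $n=0$ the contact surgery coefficient on the $a$-th row of the triangle becomes $-3(a-1)$. For $a=1$ this is $0$, which does not correspond to a Legendrian surgery, so the top row contributes nothing; this is consistent with the formula because the $a=1$ summand $(3\cdot 1 - 3 + 0)\cdot 1$ vanishes. For each $a \geq 2$ the coefficient is a genuine negative integer, $3(a-1)$ different choices of stabilization are available, and the two distinctness mechanisms used in Proposition \ref{prop}---Theorem \ref{Legendrian Surgery preserve uniquness} to separate Legendrian surgeries on smoothly isotopic knots coming from distinct $\xi_a^b$'s, and Lemma \ref{lm 2.1} to separate surgeries obtained from different stabilizations of a single $\xi_a^b$---carry over verbatim, and Legendrian surgery preserves strong fillability. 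Summing the contributions from $a=2, \dots, m$ recovers $\sum_{a=1}^m (3a-3+0)\,a$, as required.

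The main (and essentially only) obstacle I anticipate is a routine bookkeeping step: one must check that the red-circle surgery on Figure \ref{P1} with framing $-3m+2$ (i.e.\ the $n=0$ instance of the diagram) still defines a Seifert fibered $3$-manifold with the Seifert invariants claimed for $M_m^0$. This is a direct Seifert-invariant manipulation parallel to the $n \geq 1$ case handled in Section 3 and presents no conceptual difficulty, so modulo this verification the lower bound follows from a one-sentence extension of Proposition \ref{prop}.
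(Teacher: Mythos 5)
Your proposal proves the statement as literally written, but it misses what this proposition is actually for. The word ``at least'' in the statement is evidently a typo for ``at most'': the section is titled ``Upper Bound,'' its opening sentence announces an upper bound via convex surface theory, and the paper's own proof of this proposition is a convex-surface-theory argument (Imbalance Principle, Twist Number Lemma, edge rounding, classification of tight structures on solid tori) that concludes ``in total we have at most $\sum_{a=1}^m(3a-3+n)a$.'' The lower bound is already established in Proposition \ref{prop}; Theorem \ref{Classifcation of SFS1} asserts an \emph{exact} count, so the missing ingredient at this point in the paper is precisely the complementary upper bound showing that the Legendrian-surgery construction of Section 3 exhausts all tight contact structures on $M_m^n$. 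Your argument---which amounts to re-citing Proposition \ref{prop}, noting that $\sum_{a=1}^m(n+3(a-1))a$ equals $\sum_{a=1}^m(3a-3+n)a$, and checking the $n=0$ edge case---supplies essentially no new information and leaves the classification theorem unproved.

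Concretely, the content you would need is of a completely different character: decompose $M_m^n$ as $(\Sigma\times S^1)\cup_{A_1\cup A_2\cup A_3}(V_1\cup V_2\cup V_3)$, show that any tight $\xi$ has maximal twisting number $tw(\xi)=-6(m-l)+5$ for some $0\le l\le m-1$ (this is where the hypothesis bounding $n$ from above enters, to rule out more negative twisting via the slope computations on $\pa V_3$), and then for each such $l$ bound the number of tight structures by the product of the counts of tight structures on the solid tori $V_2$ and $V_3$ with the resulting boundary slopes ($-l-1$ and $-3l-1-n$ respectively), giving at most $(3l+1+n)(l+1)$ structures at each level and $\sum_{a=1}^m(3a-3+n)a$ in total. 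None of this appears in your proposal, so as it stands there is a genuine gap: you have re-proved a lower bound where an upper bound is required, and the two bounds together are what make the count in Theorem \ref{Classifcation of SFS1} exact.
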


\begin{proof}
The proof is presented assuming the reader is familiar with convex surface theory  \citep{HondaOntheClassificationOfTightContactStructures}. The argument is parallel to the ones in \citep{GhigginiSchonenbergerOntheclassificationoftightcontactstructures},\citep{GhigginiVanHornMorrisTightContactStructuresOnBrieskornSpheres} and \citep{TosunTightsmallSeifertfiberedmanifolds}.

We first rewrite the Seifert notation for $M_m^n$ as $M(0; -\frac{1}{3},-\frac{m}{6m+1},\frac{3m+n-2}{6m+2n-5} )$ (we exchange the fiber order here). As before we can describe $M_m^n$ as $M_m^n=(\Sigma \times S^1) \cup_{A_1\cup A_2 \cup A_3} (V_1\cup V_2 \cup V_3)$ where $\Sigma$ is a pair of pants (i.e. 3 punctured sphere) and $V_i$'s are solid torus neighborhood of the singular fibers $F_i$. Again the $A_i$ are maps from $\pa V_i$ to $-\pa(\Sigma \times S^1)$. We again choose identifications on $V_i\cong D^2 \x S^1$, $\pa V_i \cong \mathbb{R}^2/\mathbb{Z}^2$ such that $(1,0)^T$ correspond to the meridian, and $-\pa M_m^n\backslash V_i \cong \mathbb{R}^2/\mathbb{Z}^2$ such that $(0,1)^T$ corresponds to the $S^1$ fiber and $(1,0)^T$ corresponds to the $-\pa(pt \x \Sigma)$. Then we can choose 

\[
A_1=\left( \begin{array}{cc}
3&-1\\ 1&0
\end{array} \right), \qquad A_2=\left( \begin{array}{cc}
6m+1 & 6 \\ m&1
\end{array} \right), \qquad A_3=\left( \begin{array}{cc}
6m+2n-5 & 2\\ -3m-n+2 & -1 
\end{array} \right),
\]
we also have the inverse of the $A_i$
 
\[ 
A_1^{-1}=\left( \begin{array}{cc}
0&1\\ -1&3
\end{array} \right), \qquad A_2^{-1}=\left( \begin{array}{cc}
1&-6 \\ -m&6m+1
\end{array} \right), \qquad A_3^{-1}=\left( \begin{array}{cc}
-1 & -2\\ 3m+n-2 & 6m+2n-5 
\end{array} \right).
\]

We would like to first determine the maximal twisting number of tight contact structure on $M_m^n$. 

\begin{claim}
If $\xi$ is a tight contact structure on $M_m^n$ then $tw(\xi) \geq -6m+5$
\end{claim}

\begin{proof}
As a standard starting point of arguments like this, we begin with very negative twisting number $n_i$ on $V_i$ so the boundary slope on $V_i$ are $\frac{1}{n_i}$ and when measured on $-\pa (M_m^n\backslash V_i)$ it becomes $$s_1=\frac{n_1}{3n_1-1}, \ s_2=\frac{mn_2+1}{(6m+1)n_2+6}, \ s_3=\frac{(-3m-n+2)n_3-1}{(6m+2n-5)n_3+2}$$

Note when $3n_1-1\neq(6m+1)n_2 +6$, and taking the Legendrian ruling slope on each $\pa (M_m^n\backslash V_i)$ to be infinite using Giroux flexibility, if we consider the vertical annulus $\mathcal{A}$  whose Legendrian boundaries are Legendrian rulings on $\pa (M_m^n\backslash V_1)$ and $\pa (M_m^n\backslash V_2)$,  the Imbalance Principle \citep{HondaOntheClassificationOfTightContactStructures} implies the existence of a bypass on one side or the other of $\mathcal{A}$---and then the Twist Number Lemma \citep{HondaOntheClassificationOfTightContactStructures} allows us to increase the the corresponding twisting number by one. Iterating the procedure, so long as the assumption $3n_1-1\neq(6m+1)n_2 +6$ continues to hold then the Twist Number Lemma will apply to allow us to increase $n_1$ up to $-2m+2$ and $n_2$ to $-1$. If this happens, then at this point $3n_1-1=(6m+1)n_2 +6=-6m+5$. Hence if we let $tw(\xi)=t$ we need to show that if we increase $n_1$ and $n_2$ to a point at which $3n_1-1=(6m+1)n_2 +6=t$, then $t\geq -6m+5$. 

Assume $3n_1-1=(6m+1)n_2 +6$ and write this common value as $t$. Observe that this implies that there is some integer $k$ with $n_1=-(6m+1)k-(2m-2)$, and $n_2=-3k-1$, and $t=-3(6m+1)k-(6m-5)$. Since $n_1,n_2 <0$, we have $k\geq 0$. If $k = 0$ then we are done, so we suppose $k\geq 1$.  We consider the annulus $\mathcal{A}$ again: since we assume that we have achieved the twisting number, the dividing set $\mathcal{A}$ consists of parallel arcs connecting the boundary components of $\mathcal{A}$. If we cut along $\mathcal{A}$  and round the edge we get a smooth manifold $ M_m^n\backslash (V_1 \cup V_2 \cup \mathcal{A} )$ such that the boundary is smoothly isotopic to $\pa (M_m^n\backslash V_3)$. Moreover, by the Edge Rounding lemma \citep{HondaOntheClassificationOfTightContactStructures} the slope is 

$$s_{\pa(M_m^n\backslash (V_1 \cup V_2 \cup \mathcal{A}))}=\frac{n_1}{3n_1-1}+\frac{mn_2+1}{(6m+1)n_2+6}-\frac{1}{3n_1-1}=\frac{(9m+1)k+(3m-2)}{3(6m+1)k+(6m-5)}.$$ 

By applying $A_3^{-1}$ to the corresponding vector, we find that when measured in $\pa V_3$ the slope is  $$s_{\pa V_3}= -\frac{(12m+n-1)k-n}{k-1}=-(12m+n-1)-\frac{12m-1}{k-1}$$.

Now if $k=1$, we have $t=-3(6m+1)-(6m-5)=-24m+2$. Since $1\leq m$, we see $t\leq -22$. On the other hand by the above calculation, when $k=1$ the slope on $\pa V_3$ is infinite and is $-\frac{1}{2}$ on $-\pa (M_m^n \backslash V_3)$ which implies the maximal twisting number is at least $-2$. This contradicts the maximality of t. 

If $k\geq 2$, we have $t \leq -6(6m+1)-(6m-5)=-42m-1$. Using the above slope again and since $1\leq m$ and $1\leq n$, we have $s_{\pa V_3} < -12$. Moreover, since we start with boundary slope $\frac{1}{n_3}$ with $n_3$ very negative, by using  \citep[Corollary 4.8]{HondaOntheClassificationOfTightContactStructures} we can find a torus (between a small standard torus around $F_3$ and the torus obtained above by cutting and rounding) with slope $-1$. This is $\frac{3m+n-3}{-6m-2n+7}$ when measured on $-\pa (M_m^n \backslash V_3)$. However if $n< 18m+4$ we have $t \leq -42m-1 < -6m-2n+7$ contradicting the maximality of $t$.
\end{proof}

It was proved by Wu \citep{WuLegendrianVerticalCircles} for Seifert space with $e_0=-2$ the maximal twisting number is less than $0$. So we left to analyze the situation when $-6m+5\leq tw(\xi)\leq -1$ 

We have already arranged that $n_2=-1$ and $n_1=-2m+2$. If there is no bypass on either $V_1$ or $V_2$ we are in the above situation where $k=0$, i.e. $tw(\xi)=-6m+5$. Since $V_1$, $V_2$ are standard neighborhoods there is unique tight contact structure on $V_1$ and $V_2$. Moreover, the tight contact structure on neighborhood of $\mathcal{A}$ is uniquely determined by the dividing set which are just horizontal arcs. Finally, the slope on $\pa V_3$ is $-n$ (strictly, this is the slope on the boundary of $M_m^n\backslash (V_1 \cup V_2 \cup \mathcal{A})$, viewed from $V_3$). By the classification of tight contact structures on solid torus (Theorem 2.3 \citep{HondaOntheClassificationOfTightContactStructures})  there are exactly $n$ different tight contact structure on $V_3$. Hence together there are at most $n$ different tight contact structure on $M_m^n$ when $tw(\xi)=-6m+5$.

If there exist a bypass on  $V_1$ by the Twist Number lemma we can increase $n_1$ to $-2m+3$, and then consider the dividing set on $\mathcal{A}$. Again by the Imbalance Principle there exists a bypass on $-\pa(M_m^n \backslash V_2)$ with ruling slope $\infty$. Since $s_2=\frac{-m+1}{-6m-5}=\frac{m-1}{6m-5}$, after attaching bypass and applying  \citep[Lemma 3.15]{HondaOntheClassificationOfTightContactStructures} the new boundary slope become $s_2=\frac{(m-1)-1}{6(m-1)-5}=\frac{m-2}{6m-11}$. Using the Imbalance Principle and the Twist Number Lemma again we increase $n_1$ to $-2m+4$. (Same situation happens if there exists a bypass on $V_2$) 

If there is no further bypass on $V_1$ or $V_2$ we are in the situation where $tw(\xi)=-6(m-1)+5$. We cut and round edges to get $$s_{\pa(M_m^n\backslash (V_1 \cup V_2 \cup \mathcal{A}))}=\frac{n_1}{3n_1-1}+\frac{(m-1)-1}{-(6m-1)+5}-\frac{1}{3n_1-1}=\frac{3m-5}{6m-11}$$ and again by applying $A_3^{-1}$ we get the slope $-3-n$ on $\pa V_3$. Moreover, since $s_2=\frac{m-2}{6m-11}$  by applying $A_2^{-1}$ we got slope $-2$ on $\pa V_2$. Therefore since there is $1$ tight contact structure on $V_1$ and $\mathcal{A}$, $2$ tight contact structure on $V_2$ and $3+n$ on $V_3$ there are at most $2\times (3+n)$ tight contact structure on $M_m^n$ with $tw(\xi)=-6m+11$.

Inductively we can do this until $n_1=0$. i.e. we have $(m-1)$ times of choosing the existence of a bypass on $V_1$ (or $V_2$). Suppose we can do it till the $l$-th time, which means that at the $l$-th step we increase $n_1$ from $-2m+2l$ to $-2m+2l+2$, the slope on $s_2$ changes from $\frac{(m-(l-1))-1}{6(m-(l-1))-5}$ to $\frac{(m-l)-1}{6(m-l)-5}$ and the maximal twisting number is $tw(\xi)=-6(m-l)+5$. Cut, round, and apply $A_3^{-1}$ again: we find slope $-3l-1-n$ on $\pa V_3$. Applying $A_2^{-1}$ gives slope $-l-1$ on $\pa V_2$. Hence there are at most $(3l+1+n)(l+1)$ tight contact structures on $M_m^n$ with $tw(\xi)=-6(m-l)+5$.

In summary we have the following table: 

\begin{center}
\begin{tabular}{||c c c c||} 
 \hline
 $tw(\xi)$ & $n_1$ & slope on $\pa V_3$  & Slope on $\pa V_2$   \\ 
 [0.5ex] 
 \hline\hline
 $-6m+5$ & $-2m+2$ & $-n$ & $-1$  \\ 
 $-6(m-1)+5$  & $-2m+4$ & $-n-3$ & $-2$  \\
 \vdots & \vdots &\vdots & \vdots  \\
 $-1$ & $0$ & $-n-3(m-1)$ & $-m$  \\
[1ex] 
 \hline
\end{tabular}
\end{center}

Therefore in total we have at most $\sum_{a=1}^m (3a-3+n)a=\frac{(2m+n-2)(m+1)m}{2}$ for $1\leq n<18m+4$.

\end{proof}

An exactly parallel argument works for Theorem 1.4; we just do the surgery along the singular fiber $F_2$ in $-\Sigma (2,3,6m+1)$ instead of $F_1$. We obtain another family of Seifert fiber spaces as stated in the Theorem, and analogous proofs follow for determining the contact structures.   
\begin{remark}
    The above types of argument could potentially also be used for Legendrian surgery on the third singular fiber, and other 3-manifolds with contact structures having different contact invariants (For example $-\Sigma(2,3,6n-1)$. However the framing calculation and the convex surface argument is more subtle, so the author chooses to not present them here. 
\end{remark}

\bibliographystyle{plain}
\bibliography{bib}

\end{document}